\newtheorem{theorem}{Theorem}
\newtheorem{proposition}{Proposition} 
\newtheorem{corollary}{Corollary} 
\newtheorem{remark}{Remark} 
\newtheorem{lemma}{Lemma}
\begin{document}\title[On a variant of Hilbert's 16th problem]{On a variant of Hilbert's 16th problem}
	
\author[Armengol Gasull and Paulo Santana]
{Armengol Gasull$^1$ and Paulo Santana$^2$}
	
\address{$^1$ Departament de Matem\`{a}tiques, Facultat de Ci\`{e}ncies, Universitat Aut\`{o}noma de Barcelona, 08193 Bellaterra, Barcelona, Spain ; and Centre de Recerca Matem\`{a}tica, Edifici Cc, Campus de Bellaterra, 08193 Cerdanyola del Vall\`{e}s (Barcelona), Spain}
\email{armengol.gasull@uab.cat}
	
\address{$^2$ IBILCE--UNESP, CEP 15054--000, S. J. Rio Preto, S\~ao Paulo, Brazil}
\email{paulo.santana@unesp.br}
	
\subjclass[2020]{Primary: 34C07. Secondary: 37G15}
	
\keywords{Limit cycles; Hilbert 16th problem; Abelian integrals}
	
\begin{abstract}
	We study the number of limit cycles that a planar polynomial vector field can have as a function of its number $m$ of monomials. We prove that the number of limit cycles increases at least quadratically with $m$ and we provide good lower bounds for $m\leqslant10$.
\end{abstract}
	
\maketitle
	
\section{Introduction and statement of the main results}\label{Sec1}

In his address to the International Congress of Mathematicians in Paris 1900, David Hilbert raised his famous list of problems for the $20$th century \cite{Browder}, with the $16$th problem being divided in two parts. In the first part motivated by \emph{Harnack's Curve Theorem} \cite{Harnack}, Hilbert asks from a description of the relative positions of the ovals of the algebraic curves satisfying Harnack's upper bound.

In the second part, motivated by finding an analogous to Harnack's result, Hilbert asks for the maximum number and relative position of limit cycles of planar polynomial vector fields. More precisely, given a planar polynomial vector field $X$, let $\pi(X)$ denote its number of limit cycles (i.e. isolated periodic orbits), where  the value infinity is also admitted. Let also $\mathcal{X}_n$ be the family of the planar polynomial vector fields $X=(P,Q)$ of degree $n$ (i.e. $\max\{\deg P,\deg Q\}=n$). The \emph{Hilbert number} $\mathcal{H}(n)\in\mathbb{Z}_{\geqslant0}\cup\{\infty\}$ is given by
	\[\mathcal{H}(n)=\sup\{\pi(X)\colon X\in\mathcal{X}_n\}.\]
The second part of Hilbert's $16$th problem consists in providing an upper bound for $\mathcal{H}(n)$, as a function of $n$, and a description of the relative position of such limit cycles. This problem is still open and is also part of Smale's list of problems for the $21$th century \cite{Smale}. In his own words: \emph{except for the Riemann hypothesis it seems to be the most elusive of Hilbert’s problems}. Despite the many attempts, no progress was made in finding upper bounds for $\mathcal{H}(n)$. So far it is not even known if $\mathcal{H}(2)$ is finite or not. While it has not been possible to find upper bounds for $\mathcal{H}(n)$, there has been success in obtaining lower bounds. It is known that $\mathcal{H}(n)$ increases at least as fast as $O(n^2\ln n)$. See \cite{ChrLlo1995,HanLi2012}. In fact, it was even conjectured in 1988 by Lloyd that $\mathcal{H}(n)$ is of order $O(n^3),$ see \cite{Lloyd}. For lower values of $n$, as far as we know, at this moment the best lower bounds are $\mathcal{H}(2)\geqslant 4$ \cites{ChenWang1979,Son1980}, $\mathcal{H}(3)\geqslant 13$ \cite{LiLiuYang2009} and $\mathcal{H}(4)\geqslant 28$ \cite{ProTor2019}. For more lower bounds, we refer to \cites{ProTor2019,HanLi2012}. 

In this paper we study a variant of Hilbert's $16$th problem. Instead of looking at the number of limit cycles as a function of the degree of $X$, we look it as a function of the number of monomials. 

We now provide a precise statements of our main results. Given a planar polynomial vector field $X=(P,Q)$, we say that $X$ has $m$ monomials if the sum of the number of monomials of $P$ and $Q$ is equal to $m$. Let $\mathcal{M}_m$ be the family of planar polynomial vector fields with $m$ monomials, independently of its degree. We define the \emph{Hilbert monomial number} $\mathcal{H}^M(m)\in\mathbb{Z}_{\geqslant0}\cup\{\infty\}$ as
	\[\mathcal{H}^M(m)=\sup\{\pi(X)\colon X\in\mathcal{M}_m\}.\]
So far very little is known about $\mathcal{H}^M(m)$. It follows from Buzzi et al \cite{BuzGasYag2021} that $\mathcal{H}^M(m)=0$ for $m\in\{1,2,3\}$, $\mathcal{H}^M(m)\geqslant m-3$ for $m\geqslant4$ and that there is a sequence of positive integer numbers $m_k\to\infty$, such that $\mathcal{H}^M(m_k)\geqslant N(m_k)$, with $N(m)$ of order $O(m\ln m)$. This second lower bound follows from the results of Álvarez and collaborators \cite{ACDP} obtained for Liénard type vector fields and it can  be seen that it can also be obtained from the lower bound of type $O(n^2\ln n)$ of $\mathcal{H}(n).$   

In our first main result we improve these general lower bounds proving that $\mathcal{H}^M(m)$ increases at least with order $O(m^2)$. 

\begin{theorem}\label{Main1}
	If $m\geqslant 9$, then $\mathcal{H}^M(m)\geqslant\frac{1}{2}m^2-3m-8$.
\end{theorem}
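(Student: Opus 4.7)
The plan is to go beyond the one-dimensional Li\'enard-type constructions used to prove the $O(m\log m)$ bound in \cite{ACDP} by exploiting a genuinely two-parameter efficiency in sparsity: a sparse Hamiltonian can support many centers simultaneously, and a sparse perturbation can create many limit cycles around each center. The product of these two linear counts should yield the claimed quadratic growth.

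Concretely, I would first take a Hamiltonian of the form $H(x,y) = \tfrac{1}{2}y^2 + V(x)$, where $V'(x) = x\prod_{i=1}^{k-1}(x^2 - a_i^2)$ for distinct positive $a_i$. After expansion $V'$ has only $k$ monomials (all odd powers of $x$) but $2k-1$ simple real zeros, so $V$ has $k$ wells and the Hamiltonian vector field $\dot{x}=y$, $\dot{y}=-V'(x)$ uses $k+1$ monomials while producing $k$ period annuli. I would then perturb by a sparse pair $(p,q)$ of polynomials using $\ell$ further monomials (with carefully chosen parities so that the contributing terms in the Abelian integrals are independent), and analyze the first-order Melnikov function
\[M(h) \;=\; \oint_{H=h}\!\bigl(q\,dx - p\,dy\bigr)\]
on each period annulus, aiming to produce about $\ell$ simple zeros per annulus. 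By classical perturbation theory this yields about $k\ell$ hyperbolic limit cycles using $m \sim k+\ell$ monomials. Optimizing over $k$ and $\ell$ gives quadratic growth of order $m^2$; to reach the precise coefficient $1/2$ one would additionally exploit the $\mathbb{Z}_2$-symmetry $x\mapsto -x$ of $V$, which pairs up annuli and roughly doubles the effective count for suitably chosen odd/even perturbations, together with cycles bifurcating from the level sets through the intermediate saddles.

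The main obstacle is Step 3, namely showing that $M(h)$ genuinely has $\ell$ zeros in each of the $k$ period annuli. This requires establishing a Chebyshev-type property for the family of Abelian integrals $\{\oint_{H=h} x^i y^j\,dx\}$ restricted to each annulus, so that the $\ell$ free coefficients of $(p,q)$ can be tuned to produce $\ell$ independent simple zeros of $M$. One would approach this by writing down the Picard--Fuchs system for the hyperelliptic curve $y^2 = 2(h - V(x))$, combined with asymptotic expansions of the integrals at the centers (small $h$) and at the separatrix levels, where continuity across the saddles can provide additional zeros of $M$. Controlling these zero counts uniformly across the $k$ annuli, and then invoking the implicit function theorem to realize them as limit cycles for $\varepsilon>0$ small enough, is where the technical heart of the proof lies.
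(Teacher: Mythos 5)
Your high-level strategy --- multiplying a linear-in-$m$ count of period annuli coming from a sparse Hamiltonian by a linear-in-$m$ count of limit cycles per annulus coming from a sparse perturbation --- is exactly the mechanism behind the paper's quadratic bound, and your Hamiltonian ($k+1$ monomials, $k$ wells, via $V'(x)=x\prod_i(x^2-a_i^2)$) is close in spirit to the paper's $Q_r(x)=x\prod_{k=-r}^{r}(x-k)$. But there is a genuine gap at the step you yourself flag as the ``technical heart'': you need the Melnikov function to have about $\ell$ zeros in \emph{each} of the $k$ annuli simultaneously, and you propose to obtain this from a Chebyshev property of the Abelian integrals via Picard--Fuchs systems. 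For hyperelliptic Hamiltonians with an unbounded number of wells no such Chebyshev property is available (this is essentially the infinitesimal Hilbert 16th problem for these families), and even granting it on each annulus separately, a single choice of the $\ell$ free coefficients would have to realize $\ell$ simple zeros in all $k$ annuli at once --- the parameter count does not support this without a further idea. As written, the proof does not close. A secondary issue: your route gives $k\ell\approx m^2/4$ cycles, and the appeal to the $\mathbb{Z}_2$-symmetry of $V$ to reach the coefficient $1/2$ is not justified; symmetry saves parameters but does not double the number of annuli or of zeros per annulus.

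The paper avoids the Chebyshev question entirely with an elementary dominant-term argument. Its unperturbed system is $\dot x=y-y^3$, $\dot y=Q_r(x)$, whose centers sit on the two horizontal lines $y=\pm1$, giving $2(r+1)$ period annuli from only $r+3$ monomials --- this ratio of two centers per monomial is where the coefficient $1/2$ actually comes from. The perturbation is $R_n(x,y)=\sum_{k=0}^{n}(-1)^k x^{2(n-k)+1}(y/a_k)^{2m_k}$ with $m_1\ll m_2\ll\cdots$: since $(y/a_k)^{2m_k}\to0$ on orbits contained in $\{|y|<a_k\}$ and blows up on the part of larger orbits where $|y|>a_k$, each successive term forces a sign change of the Abelian integral between two prescribed nested orbits in every annulus at the same time, and the Poincar\'e--Pontryagin theorem together with Poincar\'e--Bendixson yields at least one limit cycle between each consecutive pair. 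No simple zeros, no independence of integrals, and no Picard--Fuchs analysis are needed. To rescue your approach you would need to replace the Chebyshev step by a concentration argument of this kind, and redesign the Hamiltonian so that the number of centers grows like twice the number of its monomials.
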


As we will see, our proof is based on the study of some Abelian integrals and it is  self-contained.

We remark that the main goal of the above result is only to show the quadratic growth of  $\mathcal{H}^M(m)$. For small $m$ the given lower bound is not good at all. For instance the result shows that $\mathcal{H}^M(10)\geqslant12$ while in our next result we prove that  $\mathcal{H}^M(10)\geqslant32$. In fact, as we will see, Theorem~\ref{Main1} is a corollary of the sharper result given in Proposition~\ref{Lemma2}: {\it for any non-negative integer numbers $n$ and $r$, there are planar polynomial vector fields with $n+r+4$ monomials and at least $2n(r+1)+n\big(1+(-1)^r\big)$ limit cycles.} 
Next we will study in more detail better lower bounds of $\mathcal{H}^M(m)$ for $m\le10.$

It follows among the series of papers about the limit cycles of cubic Li\'enard systems of Dumortier and Li that $\mathcal{H}^M(6)\geqslant4$ \cite{DL2} and $\mathcal{H}^M(7)\geqslant5$ \cite{DL4}. Also, it follows from Chow et al \cite[Sect. $4.2$]{ChoLiWan1994} that $\mathcal{H}^M(5)\geqslant 3$. In recent years Bréhard et al \cite[Chap. $6$]{Brehard1} and \cite[Sect. $7$]{Brehard2} developed a computed assisted method to study the zeros of Abelian integrals. With this method they provided a computed assisted proof of the existence of a quartic vector field with at least $24$ limit cycles. Since this vector field has only nine momonials, it follows that $\mathcal{H}^M(9)\geqslant24$. As far as we known, these are the only specific lower bounds known for small values of $m$. In our second main result we obtain better lower bounds for values of $4\leqslant m \leqslant 10.$ For $m=9$,  we replicate the known lower bound $\mathcal{H}^M(9)\geqslant24$ with a direct proof. For a summary of the previous and new lower bounds, see Table~\ref{Table1}.

\begin{theorem}\label{Main2} If $m\in\{4,5,6\}$, then 
	$\mathcal{H}^M(m)\geqslant12$. Moreover, $\mathcal{H}^M(7)\geqslant16$, $\mathcal{H}^M(8)\geqslant20$, $\mathcal{H}^M(9)\geqslant24$, and $\mathcal{H}^M(10)\geqslant32$.
\end{theorem}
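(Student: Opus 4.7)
The strategy is to derive each bound for $m\in\{5,\dots,10\}$ as a direct application of Proposition~\ref{Lemma2} and to treat the remaining case $m=4$ by an \emph{ad hoc} construction. Writing
\[
N(n,r):=2n(r+1)+n\bigl(1+(-1)^r\bigr)=
\begin{cases}
2n(r+2),&r\text{ even},\\
2n(r+1),&r\text{ odd},
\end{cases}
\]
Proposition~\ref{Lemma2} provides, for every pair of non-negative integers $(n,r)$, a vector field in $\mathcal{M}_{n+r+4}$ with at least $N(n,r)$ limit cycles, and the extra $+2n$ that appears when $r$ is even is the key to the optimization below.

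For each $m\in\{5,6,7,8,9,10\}$, I would scan the non-negative integer pairs $(n,r)$ with $n+r=m-4$ and retain the one maximizing $N(n,r)$. A short case analysis, exploiting the even-$r$ bonus, shows that the optimal choices are $(n,r)=(1,0),(2,0),(3,0),(4,0),(3,2),(4,2)$, respectively, producing $N=4,8,12,16,24,32$, which are precisely the values claimed in the theorem. For instance, for $m=9$ one checks that the pair $(3,2)$ beats every competitor: $(5,0)$ yields $20$, $(4,1)$ yields $16$, $(2,3)$ yields $16$, $(1,4)$ yields $12$, and $(0,5)$ yields $0$. The finite verification for the remaining values of $m$ is entirely analogous, and exhausts all the bounds of the theorem except $\mathcal{H}^M(4)\geqslant 2$.

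The case $m=4$ is not reached by Proposition~\ref{Lemma2} (only $(n,r)=(0,0)$ is admissible, giving $N(0,0)=0$), so it requires a separate construction. My plan would be to exhibit a concrete $4$-monomial vector field carrying two hyperbolic limit cycles by a perturbation argument. A first-order Melnikov analysis of a two-parameter perturbation of the harmonic oscillator cannot succeed, because each monomial perturbation contributes a pure power of the Hamiltonian value $h=(x^2+y^2)/2$ to the Abelian integral, so two parameters only produce a Melnikov function of the form $h^{a}(\alpha+\beta h^{b})$, which has at most one positive simple root. I would therefore either (i) perturb a nonlinear Hamiltonian center whose Abelian integrals are non-proportional, non-monomial functions of $h$, or (ii) analyze the local bifurcation of a weak focus of order two at the origin of a $4$-monomial family, using the two free coefficients to annihilate the first Lyapunov quantity while keeping the second one nonzero.

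The delicate step is exactly this $m=4$ construction: with only two effectively free monomials available beyond what is needed to exhibit a center, one must carefully check that the two parameters are functionally independent enough to separate two isolated limit cycles (either globally, via two simple zeros of an Abelian integral, or locally, via an unfolding of a codimension-two weak focus). This is the part I expect to be the main obstacle; once it is settled, the rest of the theorem reduces to the elementary parameter selection in Proposition~\ref{Lemma2} carried out above.
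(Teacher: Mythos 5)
Your treatment of $m\in\{5,\dots,10\}$ is correct and coincides with the paper's: the optimal pairs $(n,r)=(1,0),(2,0),(3,0),(4,0),(3,2),(4,2)$ are exactly the ones the authors plug into Proposition~\ref{Lemma2}, and your systematic maximization of $N(n,r)$ over $n+r=m-4$ is just a more explicit presentation of the same selection. The genuine gap is the case $m=4$, which you explicitly leave unresolved. You correctly observe that a first-order perturbation of the harmonic oscillator by two monomials cannot yield two limit cycles, but neither of your two proposed alternatives is carried out, and alternative (ii) --- a weak focus of order two at a single point unfolded by two free coefficients --- is likely the harder route: the paper's own closing remark indicates that a second-order weak focus of the relevant reversible type is only achieved with \emph{five} monomials, so it is doubtful that four monomials suffice for that strategy.

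The paper closes the $m=4$ case by a mechanism you did not consider: symmetry. The system $P_a(x,y)=ax^2y^4-a$, $Q(x,y)=x^2y^2-1$ has four monomials, is reversible with respect to the involution $(x,y)\mapsto(-x,-y)$, and has singularities at $p=(1,1)$ and $q=(-1,-1)$ exchanged by that involution. At $a=-1$ the point $p$ is a first-order weak focus with nonzero first Lyapunov constant, so a generic Andronov--Hopf bifurcation at $a=-(1+\varepsilon)$ produces one limit cycle around $p$, and reversibility simultaneously produces a second one around $q$. Two order-one foci at symmetric points, rather than one order-two focus, is what makes two limit cycles compatible with only four monomials. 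Until you supply a construction of this kind (or complete one of your alternatives), the bound $\mathcal{H}^M(4)\geqslant2$ remains unproven in your argument.
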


To illustrate some of the vector fields involved in the proof of the above theorem we show the two families of vector fields that we have used to prove that $\mathcal{H}^M(4)\geqslant12$ and $\mathcal{H}^M(9)\geqslant24$. They are 
\begin{equation}\label{example1}
		\dot x=\alpha_1 x^{g_{11}}y^{g_{12}}-\beta_1 x^{h_{11}}y^{h_{12}},\quad
		\dot y=\alpha_2 x^{g_{21}}y^{g_{22}}-\beta_2 x^{h_{21}}y^{h_{22}},
\end{equation}
for some $\alpha_i$, $\beta_i\in\mathbb{R}$ and $g_{ij}$, $h_{ij}\in\mathbb{Z}_{>0}$, and 
\begin{equation}\label{example2}
	\dot x=y-y^3+\sum_{k=0}^{5}(-1)^kx^{2(5-k)+1}\left(\frac{y}{a_k}\right)^{2m_k}, \quad \dot y=x,
\end{equation}
with $a_i>0$, $m_i\in\mathbb{Z}_{>0}$ and $1\ll m_1\ll\dots\ll m_5$, respectively. Notice that they have respectively 4 and~9 monomials, and we will show that there are values of the parameters with at least 12 and 24 limit cycles, respectively. The first one~\eqref{example1} is constructed from  a so called \emph{Planar-S system} studied at \cite{BorHofDCDS,BorHofJDE} and having three limit cycles. That planar-S system is exactly of the form~\eqref{example1}, but with exponents $g_{ij}$, $h_{ij}\in\mathbb{R},$ and it is only defined in the first quadrant. As we will see, by perturbing these exponents (to transform them into rational numbers) and after some suitable changes of variables and time we will arrive to a new system of the form~\eqref{example1} that has at last three limit cycles  in each quadrant, providing the desired lower bound. The second one \eqref{example2} is studied by using Abelian integrals.

\begin{table}[h]
	\caption{Summary of the lower bounds of the Hilbert monomial numbers. Recall that $\mathcal{H}^M(m)=0$ for $m\leqslant3$.}\label{Table1}
	\begin{tabular}{c c c} 
		\hline
		\text{Monomials} & \text{New lower bounds} & \text{Previous lower bounds} \\
		\hline
		$4$ & $12$ & $1$ \\
		$5$ & $12$ & $3$ \\
		$6$ & $12$ & $4$ \\
		$7$ & $16$ & $5$ \\
		$8$ & $20$ & $5$ \\
		$9$ & $24$ & $24$ \\
		$10$ & $32$ & $24$ \\
		$m\geqslant11$ & $\frac{1}{2}m^2-3m-8$ & $m-3$ \\
		Asymptotic & $O(m^2)$ & $O(m\ln m)$ \\
		\hline
	\end{tabular}
\end{table}

We remark that in the third column of Table~\ref{Table1}, the lower bound $\mathcal{H}^M(10)\geqslant24$ follows from the fact that in the previous known lower bound $\mathcal{H}^M(9)\geqslant24$, all the limit cycles have odd multiplicity and thus are persistent under small perturbations. Similarly the two lower bounds in the second column for $m=5$ and $6$ follow from the one obtained from $m=4.$    It is natural to believe that these two lower bounds could be  improved, but until now, we have not been able to do it. 

It is curious to observe that if we address to a similar question but for planar polynomial vector fields written in complex coordinates, that is the ones given by $\dot z=F(z,\bar z)$, where $F$ is a polynomial with $m$ monomials, a totally different result happens. On the one hand, these vector fields with $m=1$ or $m=2$ have at most 0, or 1 limit cycle, respectively~\cite{AGP}. On the other hand, when $m=3$ (or higher) there is no upper bound for the number of limit cycles~\cite{GLT}. 

The idea of looking for the number of monomials instead of the degree of polynomials goes back to Descartes and his \emph{rule of signs}, which states that if $p\colon\mathbb{R}\to\mathbb{R}$ is a polynomial with $m$ nonzero monomials, independently of its degree, then $p$ has at most $m-1$ positive real roots, counting with multiplicity. In particular, it also follows that $p$ has at most $2m-1$ distinct real roots ($m-1$ positive, $m-1$ negative and eventually the root $x=0$, which can be of any multiplicity). Moreover, there are attempts to extended Descartes' rule of signs to the multiple variable case, such as the Kouchnirenko’s conjecture (nowadays known to be false). For more details, we refer to Problems $28$ and $29$ of \cite{Gas2021}.

Furthermore in more recent developments on real algebraic geometry, Harnack's Curve Theorem is replaced by an upper bound depending solely on the \emph{number of integer points} contained in the interior of the Newton polygon of the given real polynomial \cites{Kho1978,Mik2000}, which in turn is related to the monomials of the polynomial. Moreover, it has also been shown by Mikhalkin \cite{Mik2000} that this upper bound is also related to the connected components of the complement of the amoeba associated to the polynomial. For more details we refer to the survey of Mikhalkin~\cite{MikSurvey} and the book of Itenberg et al~\cite{Book}. For applications of such techniques of algebraic geometry to polynomial vector fields, we refer to Itenberg and Shustin~\cite{IteShu2000}. For applications of the relation between a polynomial vector fields and its Newton polygon, we refer to Dalbelo et al \cite{DalOliPer2024} and the references therein.

Sprott \cite{Sprott} brought also applied the idea of looking to the number of monomials to the field of qualitative theory of ordinary differential equations by seeking for the \emph{simplest} polynomial vector field in $\mathbb{R}^3$ exhibiting chaos. By \emph{simple} Sprott means with as few monomials as possible. In his own words: \emph{the simplicity refers to the algebraic representation rather than to the physical process described by the equations.} In particular, Sprott was able to find nineteen different quadratic vector fields defined on $\mathbb{R}^3$ exhibiting chaos and with either five monomials being two of them nonlinear, or six monomials being one of them nonlinear. Nowadays such quadratic vector fields are known as \emph{Sprott A, Sprott B,$\dots$, Sprott S}. For a qualitative study on some Sprott systems, we refer to \cite{MotOli2021} and references therein. Later Sprott \cite{Sprott2} was able to find a simpler chaotic system, with five monomials being only one nonlinear. From this point of view it is interesting to observe that the celebrated Lorenz \cite{Lorenz} and Rössler \cite{Rossler} systems are also quadratic, have seven monomials and, respectively, two or one of them are nonlinear. 

Following this notion of simple vector field, Gasull \cite{Gas2021} asks in his $8th$ problem for the minimal $m_0\in\mathbb{N}$ such that $\mathcal{H}^M(m_0)>m_0$, i.e. for the \emph{simplest vector field with more limit cycles than monomials}. On that time it was known that $4\leqslant m_0\leqslant9$ due to the cubic vector field of Li et al \cite{LiLiuYang2009}, with $9$ monomials and $13$ limit cycles. From Theorem~\ref{Main2} it now follows that $m_0=4$. As we will see in the proof that $\mathcal{H}^M(4)\geqslant 12,$  a system proving that $m_0=4$ is one of the form~\eqref{example1}, but the approach used in the proof only shows the existence of an example and it does not provide neither explicit exponents nor explicit parameters. On the other hand, a very simple explicit example showing that $\mathcal{H}^M(4)\geqslant4$ is 
\begin{equation}\label{example3}
	\dot x=ax^2y^5-ay, \quad \dot y=x^3y^2-x,
\end{equation}
with $a=-(1+\varepsilon)$ and $\varepsilon>0$ small enough. It has a limit cycle surrounding each one of the four critical points $(\pm1,\pm1)$ born via an Andronov-Hopf bifurcation, see the end of the proof of Theorem~\ref{Main2}.

While preparing a first version of this paper we thought that the first wanderings about the question of relating the number of limit cycles with the number of monomials were introduced in 2021 paper~\cite{BuzGasYag2021}, but this is not true. To the best of our knowledge the first authors to address this type of questions were Boros, Hofbauer and coauthors, see the 2019 papers~\cite{BorHofDCDS,BorHofJDE}. In fact, in a recent 2024 meeting they comment this fact to the first author and also that their approach could be used to get good lower bounds for $\mathcal{H}^M(4).$ We thank very much them for their suggestion that have leaded us to improve the lower bounds of a previous version of Theorem~\ref{Main2}.

The approach of counting the monomials of a vector field instead of its degree can be seen both as a strength or a weakness. This is so, because for instance affine changes of variables change  the number of monomials, but keep the degree. It is a weakness, because in most cases the number of monomials increases but it is a strength because occasionally it can go down. A similar situation happens with the degree by using birational transformations, together with time reparametrizations. In any case, it is an interesting point of view to try to go inside the study of the number of limit cycles of natural families of vector fields. 

Applications of this approach can be seen in the field of \emph{Chemical reaction network} (CRN) \cite{CRNT}, specially under the hypothesis of \emph{mass action kinetics} (MAK) \cite{150}. Roughly speaking, CRN models the behavior of real-world chemical systems, while MAK is the assumption that the \emph{rate of a chemical reaction is directly proportional to the product of the activities or concentrations of the reactants}. For example, this means that given a chemical reaction $A+2B\to C$, the \emph{rate of occurrence} of the reaction is given by $r(c_A,c_B)=\alpha c_Ac_B^2$, where $c_A$ and $c_B$ are the concentrations of the chemicals $A$ and $B$ and $\alpha\in\mathbb{R}$ is a constant. Therefore, given a system of interrelated chemical reactions, its dynamics is molded by a polynomial system of differential equations in which each monomial represents a reaction. For an introduction of the topic, we refer to Müller and Regensburger \cite{MulReg2014}. For other applications we refer to \cite[Chapter $7$]{ErdTot}.

To fix a simple example with a limit cycle when $m=4,$ we recall the  Higgins-Seklov model of glicolysis, see~\cite{Sel}. In adimensional form it writes as
\begin{equation*}
		\dot x=1-xy^2,\quad \dot y=axy^2-ay,
\end{equation*}
where $a$ is a real positive parameter.

This paper is organized as follows. In next section we include some preliminaries about the well-known Poin\-car\'e–Pontryagin Theorem. Theorems~\ref{Main1} and \ref{Main2} are proved in Section~\ref{Sec3}. The work ends with a small section with further thoughts.
	
\section{The Poincar\'e–Pontryagin Theorem}\label{Sec2}

Given a polynomial (resp. analytic or smooth) function $H\colon\mathbb{R}^2\to\mathbb{R}$, we associate the planar polynomial (resp. analytic or smooth) vector field $X=(P,Q)$ given by
	\[P(x,y)=-\frac{\partial H}{\partial y}(x,y), \quad Q(x,y)=\frac{\partial H}{\partial x}(x,y).\]
In this case we say that $X$ is \emph{Hamiltonian} and that $H$ is its Hamiltonian function. In particular, observe that $H$ is a first integral of $X$. Suppose that $X$ has a continuum of periodic orbits 
	\[A=\{\gamma_h\colon h\in(a,b)\}\subset\{(x,y)\in\mathbb{R}^2\colon H(x,y)\in(a,b)\},\]
with $\gamma_h$ depending continuously on $h$. See Figure~\ref{Fig1}(a). A maximal set with this property is called a {\it period annulus}.

\begin{figure}[h]
	\begin{center}
		\begin{minipage}{5cm}
			\begin{center}
				\begin{overpic}[height=4cm]{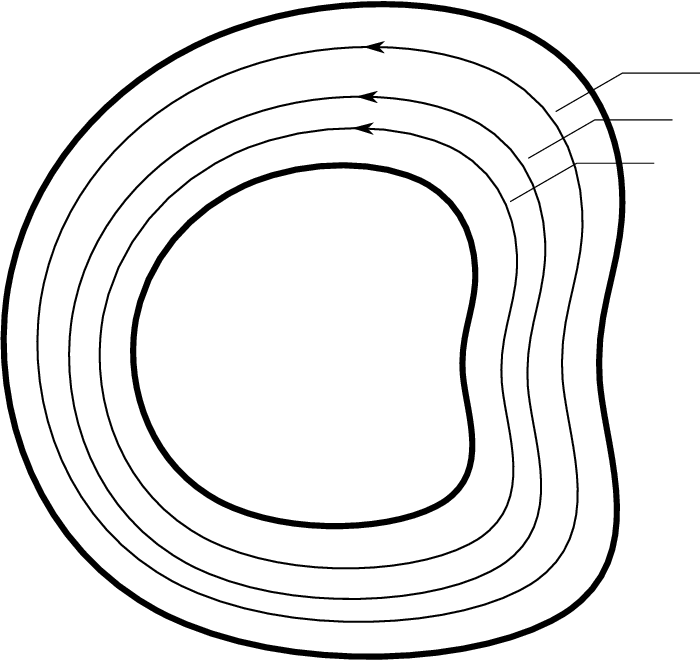} 
					\put(101,83){$\gamma_{h_3}$}
					\put(97,76){$\gamma_{h_2}$}
					\put(95,70){$\gamma_{h_1}$}
					\put(13,85){$A$}
				\end{overpic}
				
				$(a)$
			\end{center}
		\end{minipage}
		\begin{minipage}{5cm}
			\begin{center}
				\begin{overpic}[height=4cm]{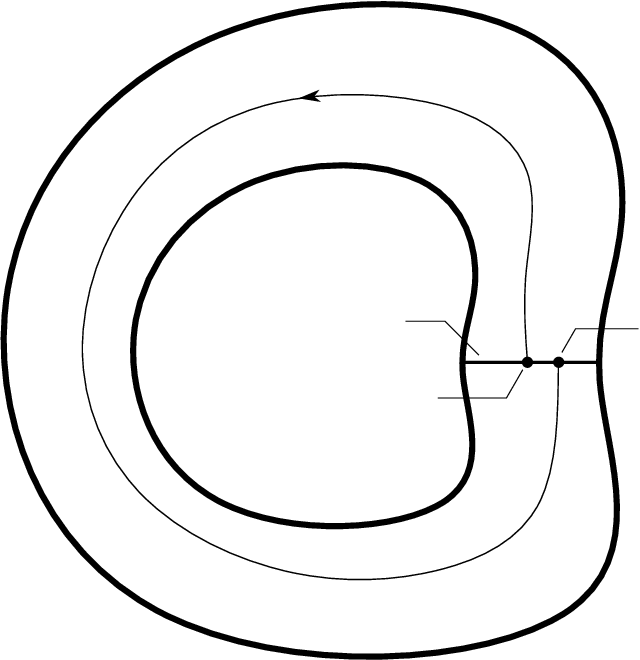} 
						\put(11,88){$A$}
						\put(60,37.5){$h$}
						\put(55,49.5){$\sigma$}
						\put(98,47.5){$P(h,\varepsilon)$}
						\put(50,89){$\gamma(h,\varepsilon)$}
				\end{overpic}
				
				$(b)$
			\end{center}
		\end{minipage}
	\end{center}
	\caption{Illustration of a continuum of periodic orbits and a displacement map.}\label{Fig1}
\end{figure}

Let $X_\varepsilon=(P_\varepsilon,Q_\varepsilon)$ be a perturbation of $X$ given by
	\[P_\varepsilon(x,y)=P(x,y)+\varepsilon f(x,y), \quad Q_\varepsilon(x,y)=Q(x,y)+\varepsilon g(x,y),\]
with $f$, $g\colon\mathbb{R}^2\to\mathbb{R}$ real polynomials and $|\varepsilon|$ small. Let $\sigma\subset A$ be a segment that is transversal to every periodic orbit $\gamma_h\subset A$ of the unperturbed vector field $X$. Given $h\in(a,b)$ and $\varepsilon\neq0$ small, let $\gamma(h,\varepsilon)$ be the piece of orbit of the perturbed vector field $X_\varepsilon$ between the starting point $h$ on $\sigma$ and the next intersection point $P(h,\varepsilon)$ with $\sigma$. See Figure~\ref{Fig1}(b). Let $d(h,\varepsilon)=P(h,\varepsilon)-h$ be the \emph{displacement map} associated to the perturbation $X_\varepsilon$. As usual, observe that $\gamma(h,\varepsilon)$ is a periodic orbit of $X_{\varepsilon}$ (resp. limit cycle) if, and only if, $(h,\varepsilon)$ is a zero (resp. isolated zero) of the displacement map. Moreover, given $h\in(a,b)$ we associate to $\gamma_h$ the line integral
\begin{equation}\label{1}
	I(h)=\oint_{\gamma_h}f\;dy-g\;dx,
\end{equation}
known as \emph{Abelian Integral}. 

\begin{theorem}[Poincaré–Pontryagin]\label{PP}
	Let $X_\varepsilon$, $d(h,\varepsilon)$ and $I(h)$ be as above. Then
	\begin{equation}\label{2}
		d(h,\varepsilon)=\varepsilon I(h)+\varepsilon^2\varphi(h,\varepsilon),
	\end{equation}
	where $\varphi(h,\varepsilon)$ is analytic and uniformly bounded for $(h,\varepsilon)$ in a neighborhood of $(h,0)$, $h\in(a,b)$.
\end{theorem}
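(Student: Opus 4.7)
The plan is to use $H$ as a coordinate on $\sigma$, rewrite the displacement as $\int_{\gamma(h,\varepsilon)} dH$, exploit the Hamiltonian structure to factor an $\varepsilon$ out of this integral, and then expand the remainder using analytic dependence of the flow on parameters.

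Since $\sigma$ is transversal to the level curves of $H$ inside the period annulus, $H|_\sigma$ is a local diffeomorphism and I reparameterize $\sigma$ so that $h\in(a,b)$ denotes the point of $\sigma$ with $H=h$. Under this identification $d(h,\varepsilon) = H(P(h,\varepsilon)) - h = \int_{\gamma(h,\varepsilon)} dH$. The Hamiltonian relations $P=-\partial_y H$, $Q=\partial_x H$ give the global identity $dH = Q\,dx - P\,dy$. Parameterizing $\gamma(h,\varepsilon)$ by time $t\in[0,T(h,\varepsilon)]$, where $T$ is the first return time to $\sigma$, one has $dx=(P+\varepsilon f)\,dt$ and $dy=(Q+\varepsilon g)\,dt$, and a direct computation yields
\[ dH\big|_{\gamma(h,\varepsilon)} = \varepsilon(Qf-Pg)\,dt = \varepsilon\,(f\,dy - g\,dx)\big|_{\gamma(h,\varepsilon)}. \]
Integrating along the perturbed arc,
\[ d(h,\varepsilon) = \varepsilon\, J(h,\varepsilon), \qquad J(h,\varepsilon) := \int_{\gamma(h,\varepsilon)} f\,dy - g\,dx. \]

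It remains to show $J(h,\varepsilon) = I(h) + \varepsilon\,\varphi(h,\varepsilon)$ with $\varphi$ analytic and locally bounded. Because $P,Q,f,g$ are polynomial, the flow $\phi_\varepsilon(t,p)$ of $X_\varepsilon$ is jointly analytic in $(t,p,\varepsilon)$. Transversality of $\sigma$ to $X_0$ persists for $|\varepsilon|$ small, so the analytic implicit function theorem produces an analytic first-return time $T(h,\varepsilon)$ on a neighborhood of each $(h_0,0)$ with $h_0\in(a,b)$. Substituting this into the time-integral representation of $J$ shows that $J$ is analytic in $(h,\varepsilon)$, and at $\varepsilon=0$ it reduces to the closed contour integral $I(h)$. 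Taylor expansion in $\varepsilon$ then gives the decomposition with $\varphi(h,\varepsilon) = \int_0^1 \partial_\varepsilon J(h,s\varepsilon)\,ds$, manifestly analytic and uniformly bounded on a compact neighborhood of $(h_0,0)$.

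The essential content is the cancellation identity $dH = \varepsilon(f\,dy - g\,dx)$ along the perturbed orbit, which geometrically explains why the first-order variation of the return map is captured exactly by the Abelian integral. The only technical hurdle, the joint analyticity and local boundedness of $T(h,\varepsilon)$ and $J(h,\varepsilon)$, is a standard consequence of the analytic theory of ODE dependence on parameters combined with persistence of transversality for small $|\varepsilon|$, so I expect it to be routine rather than a conceptual obstacle.
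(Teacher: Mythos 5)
Your proof is correct. Note that the paper itself does not prove this theorem; it only cites Christopher, Li and Torregrosa for a proof, so there is no in-paper argument to compare against. What you have written is the standard textbook derivation, and the two key steps both check out: the cancellation $Q\,dx-P\,dy=\varepsilon(Qf-Pg)\,dt=\varepsilon(f\,dy-g\,dx)$ along the perturbed orbit (the $\varepsilon^2 fg$ terms cancel, as you implicitly use), and the identification $J(h,0)=I(h)$ because the arc $\gamma(h,0)$ closes up into the periodic orbit $\gamma_h$. You were also right to make explicit the normalization that $\sigma$ is parametrized by the values of $H$: the paper's definition $d(h,\varepsilon)=P(h,\varepsilon)-h$ only yields first-order coefficient exactly $I(h)$ (rather than a positive multiple of it) under that identification, so spelling it out closes a gap the paper leaves implicit. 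The remaining analyticity and uniform boundedness claims for the return time and for $\varphi(h,\varepsilon)=\int_0^1\partial_\varepsilon J(h,s\varepsilon)\,ds$ on compact neighborhoods of $(h_0,0)$ are indeed routine consequences of analytic dependence on initial conditions and parameters together with persistence of transversality.
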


For a proof of Theorem~\ref{PP}, see Christopher et al \cite[p. $143$]{ChrLiTor2024}. It follows from \eqref{2} that if $I$ is well defined on $(h_1,h_2)$ and $I(h_1)I(h_2)<0$, then for $|\varepsilon|>0$ small enough $\gamma(h_1,\varepsilon)$ and $\gamma(h_2,\varepsilon)$ bound, together with two segments of $\sigma$, a positive or negative invariant region of $X_\varepsilon$. See Figure~\ref{Fig2}. 
\begin{figure}[h]
	\begin{center}
		\begin{minipage}{5cm}
			\begin{center}
				\begin{overpic}[height=4cm]{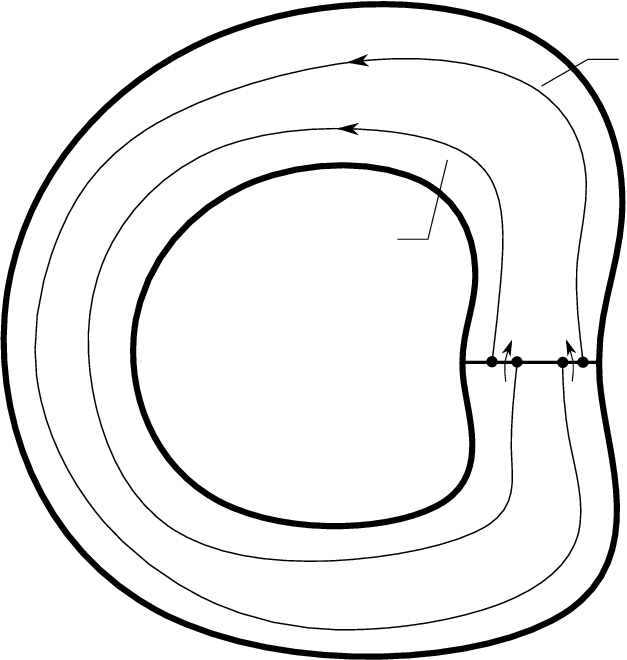} 
					\put(30,61.5){$\gamma(h_1,\varepsilon)$}
					\put(95,90){$\gamma(h_2,\varepsilon)$}
					\put(11,87){$A$}
				\end{overpic}
				
				$I(h_1)>0$ and $I(h_2)<0$
			\end{center}
		\end{minipage}
		\begin{minipage}{5cm}
			\begin{center}
				\begin{overpic}[height=4cm]{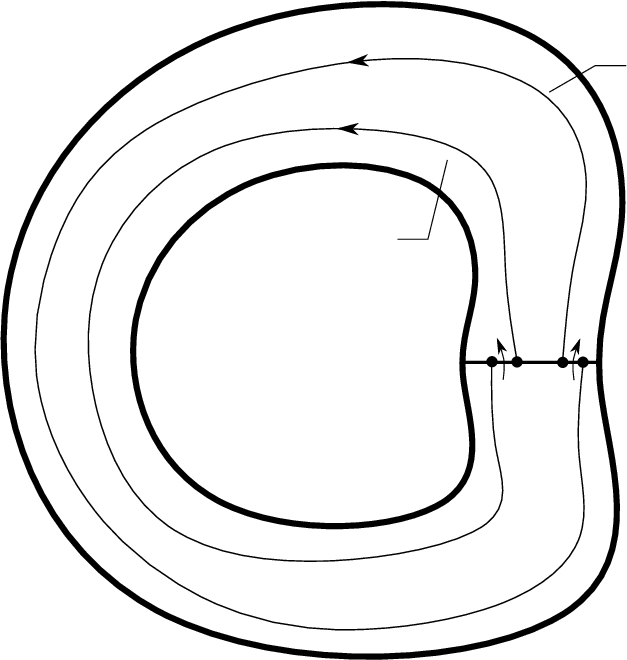} 
					\put(30,61.5){$\gamma(h_1,\varepsilon)$}
					\put(97,89){$\gamma(h_2,\varepsilon)$}
					\put(11,87){$A$}
				\end{overpic}
				
				$I(h_1)<0$ and $I(h_2)>0$
			\end{center}
		\end{minipage}
	\end{center}
	\caption{Illustration of the positive and negative invariant regions.}\label{Fig2}
\end{figure}
Hence, it follows from the Poincar\'e-Bendixson Theorem that $X_\varepsilon$ has at least one limit cycle between $\gamma(h_1,\varepsilon)$ and $\gamma(h_2,\varepsilon)$. Therefore, we have the following well-known corollary.

\begin{corollary}\label{CoroPP}
	Let $X_\varepsilon$ and $I(h)$ be as above. If $I$ is well defined on $(h_1,h_2)$ and $I(h_1)I(h_2)<0$, then there is $\varepsilon_0>0$ such that $X_\varepsilon$ has at least one limit cycle between $\gamma_{h_1}$ and $\gamma_{h_2}$, for $0<|\varepsilon|<\varepsilon_0$.
\end{corollary}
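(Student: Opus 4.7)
The plan is to combine the expansion of the displacement map from Theorem~\ref{PP} with the Poincaré–Bendixson theorem, essentially formalising the heuristic already sketched in the paragraph preceding the statement. First I would fix $h_1,h_2\in(a,b)$ with $I(h_1)I(h_2)<0$, set $c=\min\{|I(h_1)|,|I(h_2)|\}>0$, and pick a neighbourhood of $\{h_1,h_2\}\times\{0\}$ on which the remainder $\varphi$ from \eqref{2} is bounded, say $|\varphi(h_i,\varepsilon)|\le M$ for $i=1,2$ and $|\varepsilon|\le\varepsilon_1$. For $0<|\varepsilon|<\varepsilon_0:=\min\{\varepsilon_1,c/(2M)\}$, the displacement satisfies
\[
d(h_i,\varepsilon)=\varepsilon I(h_i)+\varepsilon^2\varphi(h_i,\varepsilon),\qquad i=1,2,
\]
so $\mathrm{sgn}\,d(h_i,\varepsilon)=\mathrm{sgn}\bigl(\varepsilon I(h_i)\bigr)$. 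Because $I(h_1)$ and $I(h_2)$ have opposite signs, $d(h_1,\varepsilon)$ and $d(h_2,\varepsilon)$ have opposite signs as well.

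Next I would assemble the invariant region. The two pieces of orbit $\gamma(h_1,\varepsilon)$ and $\gamma(h_2,\varepsilon)$, together with the two subsegments of $\sigma$ joining $h_i$ to $P(h_i,\varepsilon)$, bound a closed topological annulus $R_\varepsilon\subset A$. On the parts of $\sigma$ inside $R_\varepsilon$ the vector field $X_\varepsilon$ is still transversal (for $\varepsilon$ small, by continuity from the transversality of $X$ along $\sigma$), and the direction of crossing is fixed by the sign of each displacement. The opposite signs of $d(h_1,\varepsilon)$ and $d(h_2,\varepsilon)$ translate into $X_\varepsilon$ pointing into $R_\varepsilon$ along both segments (or outward along both), so $R_\varepsilon$ is either positively or negatively invariant, exactly as illustrated in Figure~\ref{Fig2}.

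Finally I would apply Poincaré–Bendixson. The period annulus $A$ of the Hamiltonian field $X$ contains no equilibria; by continuous dependence, for $\varepsilon$ small enough $X_\varepsilon$ also has no equilibria in the compact set $R_\varepsilon\subset A$. Hence for any initial condition on the inflow boundary of $R_\varepsilon$, its $\omega$-limit set (or $\alpha$-limit set in the negatively invariant case) is a non-empty compact invariant subset of $R_\varepsilon$ without equilibria, therefore a periodic orbit of $X_\varepsilon$. Shrinking $R_\varepsilon$ to its innermost such invariant curve yields an isolated periodic orbit, i.e.\ a limit cycle lying between $\gamma_{h_1}$ and $\gamma_{h_2}$.

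The only delicate point is to guarantee that the invariant region is genuinely free of equilibria and that the transversality of $\sigma$ survives the perturbation; both are immediate from compactness of the relevant pieces of $A$ and the $C^1$-closeness of $X_\varepsilon$ to $X$, so no real obstacle arises beyond careful bookkeeping of the constant $\varepsilon_0$.
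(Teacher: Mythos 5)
Your proof is correct and follows essentially the same route as the paper, which obtains the corollary from the sign of the displacement map given by Theorem~\ref{PP} together with the Poincar\'e--Bendixson theorem applied to the positively or negatively invariant annular region of Figure~\ref{Fig2}. The extra bookkeeping you supply (bounding $\varphi$, persistence of transversality of $\sigma$, and absence of equilibria of $X_\varepsilon$ in the region) is left implicit in the paper's discussion but is exactly the right way to make it rigorous.
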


\begin{remark}\label{RemarkPP}
	Let $I(h)$ be as in \eqref{1}. It follows from Green's Theorem that if $\gamma_h$ is positively oriented then
		\[I(h)=\iint_{\Gamma_h}\frac{\partial f}{\partial x}+\frac{\partial g}{\partial y}\;dxdy,\]
	where $\Gamma_{h}\subset\mathbb{R}^2$ is the interior region bounded by $\gamma_h$.
\end{remark}

\section{Proof of the main results}\label{Sec3}

\begin{lemma}\label{Lemma1}
	Given $r\in\mathbb{Z}_{\geqslant0}$, let $X_r=(P,Q_r)$ be the planar polynomial vector field given by,
	\begin{equation}\label{3}
		P(x,y)=P(y)=y-y^3, \quad Q_r(x,y)=Q_r(x)=x\prod_{k=-r}^{r}(x-k).
	\end{equation}
	Then the following statements hold.
	\begin{enumerate}[label=(\roman*)]
		\item $X_r$ is Hamiltonian
		\item $X_r$ has $r+3$ monomials
		\item $X_r$ has $r+1$ centers on each of the lines $y=\pm1$ and $r$ centers on the line $y=0.$
	\end{enumerate}
\end{lemma}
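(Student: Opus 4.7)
My plan is to handle (i) and (ii) as routine computations and concentrate on~(iii), which reduces to a sign analysis of $Q_r'$ combined with a parity bookkeeping. For~(i), since $P$ depends only on $y$ and $Q_r$ only on $x$, the divergence $\partial P/\partial x + \partial Q_r/\partial y$ vanishes identically; hence $X_r$ is Hamiltonian, with a Hamiltonian function given by $H(x,y) = F(x) + y^4/4 - y^2/2$ where $F$ is any antiderivative of $Q_r$. For~(ii), $P(y) = y - y^3$ contributes $2$ monomials. Grouping the factors $(x-k)(x+k) = x^2-k^2$ for $k=1,\dots,r$, the product $\prod_{k=1}^{r}(x^2-k^2)$ is a polynomial of degree $r$ in $x^2$ whose coefficients are, up to sign, the elementary symmetric functions of the positive numbers $1^2,2^2,\dots,r^2$; these are all strictly positive, so this polynomial has exactly $r+1$ nonzero monomials. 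After restoring the prefactor~$x$, the polynomial $Q_r$ has $r+1$ monomials and the grand total is $r+3$.

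For~(iii), the singular points of $X_r$ are exactly the pairs $(x_0,y_0)$ with $P(y_0)=0$ and $Q_r(x_0)=0$, forming a grid with $y_0\in\{-1,0,1\}$ and $x_0$ running over the $2r+1$ simple real roots of $Q_r$. At such a point, the Jacobian
\[
J(x_0,y_0) = \begin{pmatrix} 0 & 1-3y_0^2 \\ Q_r'(x_0) & 0 \end{pmatrix}
\]
has trace $0$ (as it must for a Hamiltonian field) and $\det J = (3y_0^2 - 1)\,Q_r'(x_0)$. Because the roots of $Q_r$ are simple, $Q_r'(x_0)\neq 0$, so every singular point is nondegenerate; combined with the trace-zero condition, it is a center when $\det J>0$ and a saddle when $\det J<0$. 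Equivalently, on $y_0=\pm1$ a center requires $Q_r'(x_0)>0$, while on $y_0=0$ it requires $Q_r'(x_0)<0$.

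It remains to compute $\operatorname{sgn}(Q_r'(x_0))$ at each root. Direct evaluation gives $Q_r'(0) = (-1)^r(r!)^2$, of sign $(-1)^r$. At $x_0 = \pm j$ with $j\in\{1,\dots,r\}$, the product rule leaves only one nonvanishing term, proportional to $\prod_{k\neq j}(j^2-k^2)$, whose sign is $(-1)^{r-j}$ since exactly $r-j$ of those factors (those with $k>j$) are negative; moreover $Q_r$ is odd, so $Q_r'$ is even and the signs at $\pm j$ coincide. Plugging these into the center criterion, a short parity case analysis---splitting $r$ even versus $r$ odd, deciding whether $x_0=0$ contributes a center on each horizontal line, and counting those $j\in\{1,\dots,r\}$ of matching parity (each yielding two centers, at $+j$ and $-j$)---produces exactly $r+1$ centers on $y_0 = \pm 1$ and $r$ centers on $y_0 = 0$. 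I expect this parity bookkeeping to be the main, if mild, obstacle.
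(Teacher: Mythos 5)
Your argument is correct and follows essentially the same route as the paper's proof: linearize at the grid of singular points, use the Hamiltonian structure to convert the sign of $\det J$ (equivalently, of $Q_r'$) into the center/saddle dichotomy, and count by parity, with you merely spelling out the $y=0$ line and the monomial count that the paper dismisses as ``similar'' and ``direct''. The one caveat is shared with the paper: read literally, $Q_r(x)=x\prod_{k=-r}^{r}(x-k)$ contains the factor $x-0$ twice, so $x=0$ would be a double root and $(0,\pm1)$ a degenerate point rather than a center or saddle; both your evaluation $Q_r'(0)=(-1)^r(r!)^2$ and the paper's formula $Q_r'(j)=(-1)^{r-j}\prod_{k\neq j}|j-k|$ implicitly use the intended $Q_r(x)=x\prod_{k=1}^{r}(x^2-k^2)$, under which everything you wrote is right.
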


\begin{proof} Statements $(i)$ and $(ii)$ follow directly from \eqref{3}. Hence, we focus on statement $(iii)$. Observe that the singularities of $X_r$ on the lines $y=\pm1$ are given by $(j,\pm1)$, with $j\in\{-r,\dots,r\}$. The Jacobian matrix at these singularities is given by,
	\[DX(j,\pm1)=\left(\begin{array}{cc} 0 & -2 \vspace{0.2cm} \\ Q_r'(j) & 0 \end{array}\right).\]
Hence,
\begin{equation}\label{4}
	\det DX(j,\pm1)=2Q_r'(j)=2(-1)^{r-j}\prod_{\substack{k=-r\\ k\neq j}}^r|j-k|.
\end{equation}
Since $X_r$ is Hamiltonian, it follows from \eqref{4} that $(j,\pm1)$ is either a hyperbolic saddle or a center, with the later occurring if, and only if, $\det DX(j,\pm1)>0$. Thus, we get from \eqref{4} that $(j,\pm1)$ is a center if, and only if, $j\equiv r\mod 2$. Therefore, either with $r$ even or odd, it is easy to see that we have exactly $r+1$ centers in each of the lines $y=\pm1$.  The study of the critical points on the line $y=0$ is similar.\end{proof}

\begin{proposition}\label{Lemma2}
	Given $r\in\mathbb{Z}_{\geqslant0}$, let $P(y)$ and $Q_r(x)$ be given by \eqref{3}. Then given $n\geqslant1$, there is a polynomial $R_n\colon\mathbb{R}^2\to\mathbb{R}$ with $n+1$ monomials and $\varepsilon_0>0$ such that the perturbed system $X_{n,r}=(P_{n},Q_{r})$ given by
		\[P_{n}(x,y)=P(y)+\varepsilon R_n(x,y), \quad Q_{r}(x,y)=Q_r(x),\]
	has at least 
		\[2n(r+1)+n\big(1+(-1)^r\big)\]
	limit cycles, for $0<|\varepsilon|<\varepsilon_0$. In particular, $X_{n,r}$ has $n+r+4$ monomials.
\end{proposition}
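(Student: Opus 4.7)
The proof applies Theorem~\ref{PP} to the Hamiltonian vector field $X_r$ of Lemma~\ref{Lemma1} and detects limit cycles via Corollary~\ref{CoroPP}. Since the Hamiltonian is separable, $H(x,y)=F(x)+G(y)$ with $F(x)=\int Q_r(x)\,dx$ and $G(y)=\tfrac{y^4}{4}-\tfrac{y^2}{2}$, one can describe its level sets explicitly. First I would enumerate the period annuli of $X_r$: around each of the $r+1$ centers on $y=1$ and each on $y=-1$ there is a local period annulus, giving $2(r+1)$ \emph{inner} annuli; at sufficiently large values of $H$ the level sets form additional \emph{outer} ovals encircling several critical points, whose topology depends on the parity of $r$.

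For the perturbation, guided by the six-monomial example in \eqref{example}, I would set
\[
R_n(x,y)=\sum_{i=0}^{n} c_i\, x^{\alpha_i}\, y^{2m_i},
\]
with $\alpha_0>\alpha_1>\cdots>\alpha_n\geqslant 1$ odd positive integers and $0=m_0<m_1<\cdots<m_n$ satisfying $m_{i+1}\gg m_i$. This gives a polynomial with exactly $n+1$ monomials, so that $X_{n,r}$ has $n+r+4$ monomials as claimed.

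The Abelian integral is then $I(h)=\oint_{\gamma_h}R_n\,dy=\iint_{\Gamma_h}\partial_x R_n\,dx\,dy$ by Remark~\ref{RemarkPP}. Parametrising the level curves branchwise in $y$ (using $F(x)=h-G(y)$ implicitly on $\gamma_h$), one writes $I(h)=\sum_{i=0}^{n}c_i I_i(h)$. The key analytic step is to show that, as the ratios $m_{i+1}/m_i$ tend to infinity, the factor $y^{2m_i}$ concentrates on progressively narrower neighborhoods of $|y|=1$, so that the $I_i$ effectively have disjoint $h$-supports within each inner annulus. By alternating the signs of the $c_i$ one then produces $n$ sign changes of $I(h)$ inside each of the $2(r+1)$ inner annuli. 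When $r$ is even the outer annulus contributes an additional $2n$ sign changes by the same mechanism, whereas for $r$ odd this contribution vanishes by a $\mathbb{Z}_2$-symmetry compatible with both the outer level sets and the chosen form of $R_n$; this accounts for the bonus term $n\bigl(1+(-1)^r\bigr)$. Corollary~\ref{CoroPP} then produces at least $2n(r+1)+n\bigl(1+(-1)^r\bigr)$ limit cycles of $X_{n,r}$ for $|\varepsilon|$ small.

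The main obstacle is making the scale-separation argument precise: one must verify that a \emph{single} choice of coefficients $(c_0,\ldots,c_n)$ (independent of $\varepsilon$) simultaneously forces the required sign changes on every relevant period annulus, and that the parity of $r$ really governs whether the outer annulus contributes. This reduces to sharp asymptotic estimates of integrals of the form $\int (\text{shape factor})\,y^{2m_i}\,dy$ along one-dimensional slices of $\Gamma_h$, carried out hierarchically as $m_{i+1}/m_i\to\infty$ and combined with a bookkeeping of the contributions from each pair of symmetric centers on $y=\pm1$.
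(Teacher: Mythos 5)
Your strategy for the main term $2n(r+1)$ is essentially the paper's: nested orbits $\gamma_0^k\subset\cdots\subset\gamma_n^k$ in each of the $2(r+1)$ period annuli around the centers on $y=\pm1$, a perturbation $R_n=\sum_i c_i x^{\alpha_i}y^{2m_i}$ with decreasing odd $\alpha_i$ and $m_{i+1}\gg m_i$, and alternating signs of $I=\iint\partial_xR_n$ forced by scale separation in $y$. One refinement you would need: the paper normalizes each term as $(y/a_i)^{2m_i}$ with interlacing thresholds $\alpha_{i-1}^k<a_i<\alpha_i^k$ (where $\alpha_i^k=\sup\{|y|:(x,y)\in\Gamma_i^k\}$), so that the $i$-th term tends to $0$ uniformly on $\Gamma_{i-1}^k$ and to $+\infty$ on the positive-measure set $\{|y|>a_i\}\cap\Gamma_i^k$; the phrase ``concentrates near $|y|=1$'' does not by itself give a single choice of coefficients working simultaneously on all $2(r+1)$ annuli.

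The genuine gap is the bonus term $n\bigl(1+(-1)^r\bigr)$. You attribute it to an outer period annulus of large ovals encircling several critical points, contributing $2n$ sign changes for $r$ even and vanishing by a $\mathbb{Z}_2$-symmetry for $r$ odd. This is not what happens, and as proposed it would not work: (a) the existence of such an outer family of closed ovals is not established (the Hamiltonian is $F(x)+G(y)$ with $\deg F=2r+3$ odd, so $H$ is not proper and the large level sets are unbounded; whether ovals surrounding several centers exist depends on the relative saddle levels, which you do not control); (b) even granting such an annulus, your scale-separation mechanism with $n+1$ monomials yields at most $n$ sign changes per annulus, not $2n$; and (c) the parity dependence has nothing to do with a symmetry of outer level sets. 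The paper's actual source of the extra cycles is local: by Lemma~\ref{Lemma1}, when $r$ is even (and only then) the two centers $(0,\pm1)$ lie on the line $x=0$. Inside those two annuli one inserts $n$ \emph{additional, much smaller} nested orbits $\gamma_{-1}^{k_j}\supset\cdots\supset\gamma_{-n}^{k_j}$ shrinking toward the center; on such orbits $x^2$ can be made smaller than $\tfrac{2n-1}{2n+1}\bigl(b_0^{k_j}/a_1\bigr)^{2m_1}$ (with $b_0^{k_j}=\inf|y|>0$ there), so the lower-degree-in-$x$ terms of $\partial_xR_n$ dominate and the sign of $I$ alternates again along this inner chain. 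That gives $n$ extra sign changes in each of the two special annuli, i.e.\ exactly the $2n=n\bigl(1+(-1)^r\bigr)$ surplus. Without this (or some substitute), your argument only proves the lower bound $2n(r+1)$.
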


\begin{proof} Let $p_k=(x_k,-1)$, $k\in\{1,\dots,r+1\}$, and $p_k=(x_k,1)$, $k\in\{r+2,\dots,2r+2\}$, be the centers of $X_r$ such that $x_i<x_j$ for $i<j\leqslant r+1$ and $x_i>x_j$ for $i>j\geqslant r+2$. See Figure~\ref{Fig3}.
\begin{figure}[h]
	\begin{center}
		\begin{overpic}[width=10cm]{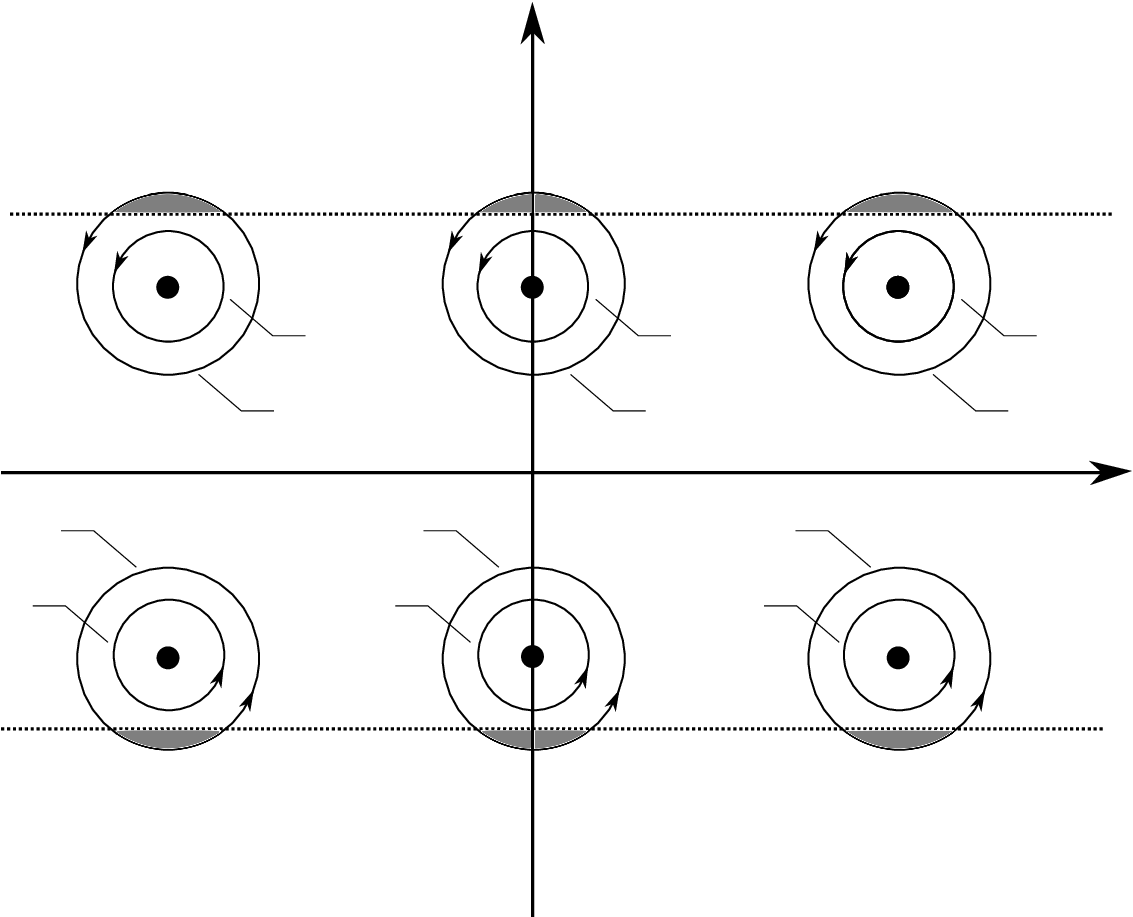} 
			\put(99,40.5){$x$}
			\put(49,79){$y$}
			\put(99,61.5){$y=a_1$}
			\put(99,16){$y=-a_1$}
			
			\put(14,25){$p_1$}
			\put(-1,27){$\gamma_0^1$}
			\put(2,33.5){$\gamma_1^1$}
			
			\put(48.5,23){$p_2$}
			\put(31,27){$\gamma_0^2$}
			\put(33.5,33.5){$\gamma_1^2$}
			
			\put(78,25){$p_3$}
			\put(64,27){$\gamma_0^3$}
			\put(66.5,33.5){$\gamma_1^3$}
			
			\put(78,52.5){$p_4$}
			\put(92,50.5){$\gamma_0^4$}
			\put(89.5,44){$\gamma_1^4$}
			
			\put(48.5,55){$p_5$}
			\put(60,50.5){$\gamma_0^5$}
			\put(58,44){$\gamma_1^5$}
			
			\put(14,52.5){$p_6$}
			\put(28,50.5){$\gamma_0^6$}
			\put(25,44){$\gamma_1^6$}
		\end{overpic}
	\end{center}
\caption{Illustration of $p_k$ and $\gamma_i^k$, for $r=2$ and $n=1$.}\label{Fig3}
\end{figure}
Let $A_k$ be the period annulus associated to $p_k$ and let $\gamma_0^k,\gamma_1^k,\dots,\gamma_n^k$ be fixed periodic orbits in $A_k$, $k\in\{1,\dots,2r+2\}$, such that $\gamma_{i-1}^k\subset\Gamma_i^k$, where $\Gamma_i^k$ is the open interior region bounded by $\gamma_i^k$, $i\in\{1,\dots,n\}$. See Figure~\ref{Fig3}. Observe that each $\gamma_i^k$ is positively oriented, $i\in\{0,\dots,n\}$, $k\in\{1,\dots,2r+2\}$. Let
	\[\alpha_i^k=\sup\{|y|\colon (x,y)\in\Gamma_i^k\},\]
$i\in\{0,\dots,n\}$, $k\in\{1,\dots,2r+2\}$. Observe that $\alpha_n^k>\dots>\alpha_0^k>0$, for each $k\in\{1,\dots,2r+2\}$. Observe also that we can choose $\gamma_0^k,\gamma_1^k,\dots,\gamma_n^k$ such that for each $i\in\{1,\dots,n\}$ there is $a_i>0$ satisfying $\alpha_{i-1}^k<a_i<\alpha_i^k$, $k\in\{1,\dots,2r+2\}$. See Figure~\ref{Fig3}. Given a polynomial $R\colon\mathbb{R}^2\to\mathbb{R}$ and a periodic orbit $\gamma$ of $X_r$, set
	\[I(R,\gamma)=\iint_{\Gamma}\frac{\partial R}{\partial x}(x,y)\;dxdy,\]
where $\Gamma$ is the interior region bounded by $\gamma$. It follows from Remark~\ref{RemarkPP} that if $\gamma$ is positively oriented, then $I(R,\gamma)$ is the Abelian integral of the perturbed vector field 
	\[P_{n}(x,y)=P(y)+\varepsilon R(x,y), \quad Q_{r}(x,y)=Q_r(x),\]
associated to $\gamma$. Let $R_0(x)=x^{2n+1}$ and observe that $I(R_0,\gamma_i^k)>0$ for every $i\in\{0,\dots,n\}$ and $k\in\{1,\dots,2r+2\}$. Given $m_1\geqslant1$ let,
	\[R_1(x,y)=R_1(x,y;m_1)=x^{2n+1}-x^{2n-1}\left(\frac{y}{a_1}\right)^{2m_1}.\]
We claim that there is $m_1\geqslant1$ big enough such that $I(R_1,\gamma_0^k)>0$ and $I(R_1,\gamma_1^k)<0$, for every $k\in\{1,\dots,2r+2\}$. Indeed, first observe that if $y\in\mathbb{R}$ is such that $|y|<a_1$, then
\begin{equation}\label{5}
	\lim\limits_{m_1\to\infty}\left(\frac{y}{a_1}\right)^{2m_1}=0.
\end{equation}
Hence, it follows from \eqref{5}, from the compactness of $\overline{\Gamma_0^k}$ (i.e. the topological closure of $\Gamma_0^k$) and from the fact that $\alpha_0^k<a_1$ that
\begin{equation}\label{6}
	\lim\limits_{m_1\to\infty}x^{2n-2}\left(\frac{y}{a_1}\right)^{2m_1}=0,
\end{equation}
uniformly in $(x,y)\in\Gamma_0^k$, $k\in\{1,\dots,2r+2\}$. Thus we have,
\[\begin{array}{rl}
	\displaystyle \lim\limits_{m_1\to\infty} I(R_1,\gamma_0^k) &\displaystyle= \lim\limits_{m_1\to\infty}\iint_{\Gamma_0^k}(2n+1)x^{2n}-(2n-1)x^{2n-2}\left(\frac{y}{a_1}\right)^{2m_1}\;dxdy \vspace{0.2cm} \\
	&\displaystyle= I(R_0,\gamma_0^k)-\lim\limits_{m_1\to\infty}\iint_{\Gamma_0^k}(2n-1)x^{2n-2}\left(\frac{y}{a_1}\right)^{2m_1}\;dxdy \vspace{0.2cm} \\
	&\displaystyle= I(R_0,\gamma_0^k)>0,
\end{array}\]
for $k\in\{1,\dots,2r+2\}$, with the last equality following from \eqref{6}. Let
	\[\Omega_i^k=\{(x,y)\in\Gamma_i\colon |y|>a_i\},\]
and observe that $\Omega_i^k$ has positive Lebesgue measure, $i\in\{1,\dots,n\}$. See the gray-shaded area in Figure~\ref{Fig3}. Hence, it follows that
	\[\lim\limits_{m_1\to\infty}\iint_{\Gamma_1^k}x^{2n-2}\left(\frac{y}{a_1}\right)^{2m_1}\;dxdy\geqslant\lim\limits_{m_1\to\infty}\iint_{\Omega_1^k}x^{2n-2}\left(\frac{y}{a_1}\right)^{2m_1}\;dxdy =+\infty.\]
Therefore,
\[\begin{array}{rl}
	\displaystyle \lim\limits_{m_1\to\infty} I(R_1,\gamma_1^k) &\displaystyle= \lim\limits_{m_1\to\infty}\iint_{\Gamma_1^k}(2n+1)x^{2n}-(2n-1)x^{2n-2}\left(\frac{y}{a_1}\right)^{2m_1}\;dxdy \vspace{0.2cm} \\
	&\displaystyle\leqslant I(R_0,\gamma_1^k)-\lim\limits_{m_1\to\infty}\iint_{\Omega_1^k}(2n-1)x^{2n-2}\left(\frac{y}{a_1}\right)^{2m_1}\;dxdy \vspace{0.2cm} \\
	&\displaystyle=-\infty.
\end{array}\]
This proves the claim. That is, there is $m_1\geqslant1$ big enough such that
\begin{equation}\label{7}
	I(R_1,\gamma_0^k)>0, \quad I(R_1,\gamma_1^k)<0,
\end{equation}
for every $k\in\{1,\dots,2r+2\}$. From now on, we fix $m_1\in\mathbb{N}$ big enough such that \eqref{7} is satisfied. It follows from the proof of Lemma~\ref{Lemma1} that if $r$ is even, then 
	\[p_{k_1}=p_{r/2+1}, \quad p_{k_2}=p_{{3r}/{2}+2}\]
lie on the line $x=0$. See Figure~\ref{Fig3}. We claim that we can choose $\gamma_{-1}^{k_j}\subset\Gamma_0^{k_j}$ such that $I(R_1,\gamma_{-1}^{k_j})<0$, $j\in\{1,2\}$. Indeed, let
\begin{equation}\label{8}
	b_0^{k_j}=\inf\{|y|\colon(x,y)\in\Gamma_0^{k_j}\},
\end{equation}
and observe that $b_0^{k_j}>0$, $j\in\{1,2\}$. Observe also that
\begin{equation}\label{9}
	\frac{\partial R_1}{\partial x}(x,y)<0 \Leftrightarrow x^2<\frac{2n-1}{2n+1}\left(\frac{y}{a_1}\right)^{2m_1}.
\end{equation}
Let $\gamma_{-1}^{k_j}\subset\Gamma_0^{k_j}$ be of small enough amplitude such that
\begin{equation}\label{10}
	(x,y)\in\Gamma_{-1}^{k_j}\Rightarrow x^2<\frac{2n-1}{2n+1}\left(\frac{b_0^{k_j}}{a_1}\right)^{2m_1},
\end{equation}
where $\Gamma_{-1}^{k_j}$ is the interior region bounded by $\gamma_{-1}^{k_j}$, $j\in\{1,2\}$. Observe that it is possible to choose $\gamma_{-1}^{k_j}$ precisely because $p_{k_j}$ lies in the line $x=0$ and it is not the origin, $j\in\{1,2\}$. Hence, it follows from \eqref{8}, \eqref{9} and \eqref{10} that
	\[\left.\frac{\partial R_1}{\partial x}(x,y)\right|_{\Gamma_{-1}^{k_j}}<0,\]
and thus we have $I(R_1,\gamma_{-1}^{k_j})<0$, $j\in\{1,2\}$. This proves the claim. Therefore, it follows that if $|\varepsilon|>0$ is small enough, then the perturbed vector field $X_{1,r}=(P_{1},Q_{r})$ given by
	\[P_{1}(x,y)=P(y)+\varepsilon R_1(x,y), \quad Q_{r}(x,y)=Q_r(x),\]
has $r+5$ monomials and at least $2(r+1)+1+(-1)^r$ limit cycles, being $2(r+1)$ of them bifurcating between the orbits $\gamma_0^k$ and $\gamma_1^k$, $k\in\{1,\dots,2r+2\}$ and the other (possibly) two between $\gamma_0^{k_j}$ and $\gamma_{-1}^{k_j}$, $j\in\{1,2\}$, when $r$ is even. Similarly, we can continue this process and obtain moreover another family of $2(r+1)+1+(-1)^r$ cycles by considering,
	\[R_2(x,y)=R_2(x,y;m_1,m_2)=x^{2n+1}-x^{2n-1}\left(\frac{y}{a_1}\right)^{2m_1}+x^{2n-3}\left(\frac{y}{a_2}\right)^{2m_2}.\] Then, for this vector field we have obtained $4(r+1)+2(1+(-1)^r)$ limit cycles.
More precisely, once obtained $R_1$, we can take $m_2>m_1$ big enough such that none of the previous Abelian integrals changes sign at the same time that $I(R_2,\gamma_2^k)>0$, $k\in\{1,\dots,2r+2\}$. Then, if $r$ is even, we can choose $\gamma_{-2}^{k_j}\subset\Gamma_{-1}^{k_j}$ small enough such that $I(R_2,\gamma_{-2}^{k_j})>0$, $j\in\{1,2\}$. 

Continuing this process, we obtain a perturbation of the form
	\[R_n(x,y)=\sum_{k=0}^{n}(-1)^kx^{2(n-k)+1}\left(\frac{y}{a_k}\right)^{2m_k},\]
with $a_0=1$, $m_0=0$ and $m_k\gg m_{k-1}$, $k\in\{1,\dots,n\}$, such that the perturbed vector field $X_{n,r}=(P_{n},Q_{r})$ given by
	\[P_{n}(x,y)=P(y)+\varepsilon R_n(x,y), \quad Q_{r}(x,y)=Q_r(x),\]
has $n+r+4$ monomials and at least $2n(r+1)+n\big(1+(-1)^r\big)$
limit cycles, for $|\varepsilon|>0$ small enough. \end{proof}

\begin{proof}[Proof of Theorem~\ref{Main1}] It follows from Proposition~\ref{Lemma2} that we have a two-parameter family of planar polynomial vector fields $X_{n,r}$, with $r\geqslant0$ and $n\geqslant1$, such that
\begin{equation}\label{11}
	\mathcal{H}^M(n+r+4)\geqslant 2n(r+1)+n\big(1+(-1)^r\big)\geqslant 2n(r+1).
\end{equation}
If we replace $m=n+r+4$ at \eqref{11} we obtain,
\begin{equation}\label{12}
	\mathcal{H}^M(m)\geqslant2(m-r-4)(r+1).
\end{equation}
In order to maximize the leading coefficient of the right-hand side of \eqref{12}, and knowing that $r$ must be an integer, we take
\begin{equation}\label{13}
	r=\frac{1}{2}m+\frac{(-1)^m-1}{4}.
\end{equation}
Replacing \eqref{13} at \eqref{12} we obtain,
\begin{equation}\label{14}
	\mathcal{H}^M(m)\geqslant\frac{1}{2}m^2-3m-8+\frac 94(1-(-1)^m)\geqslant\frac{1}{2}m^2-3m-8.
\end{equation}
This finishes the proof.\end{proof}
 
 \begin{proof}[Proof of Theorem~\ref{Main2}]
 Let $X_{n,r}$ be given by Proposition~\ref{Lemma2}. We recall that $X_{n,r}$ has $n+r+4$ monomials and at least
$2n(r+1)+n\big(1+(-1)^r\big)$
limit cycles, for $|\varepsilon|>0$ small. If we take $r=2$ and $n=3$ (resp. $n=4$) we obtain $\mathcal{H}^M(9)\geqslant24$ (resp. $\mathcal{H}^M(10)\geqslant32$). 

We now focus on the claim that $\mathcal{H}^M(m)\geqslant12$ for $m\in\{4,5,6\}$. Consider the analytic system defined on the open first quadrant of $\mathbb{R}^2$ and given by
\begin{equation}\label{15}
	\dot x= \alpha_1 x^{g_{11}}y^{g_{12}}-\beta_1 x^{h_{11}}y^{h_{12}}, \quad \dot y= \alpha_2 x^{g_{21}}y^{g_{22}}-\beta_2 x^{h_{21}}y^{h_{22}},
\end{equation}
with $\alpha_i$, $\beta_i$, $g_{ij}$, $h_{ij}\in\mathbb{R}$. It follows from Boros and Hofbauer \cite[Section $7$]{BorHofJDE} that for some choice of the parameters and exponents, system \eqref{15} has at least three limit cycles of odd multiplicity. In particular, such limit cycles persist under small perturbations. Therefore, we can take a rational approximation of such exponents and thus suppose that system \eqref{15} can be written as
\begin{equation}\label{16}
	\dot x= \alpha_1 x^{\frac{a_1}{b_1}}y^{\frac{c_1}{d_1}}-\beta_1 x^{\frac{a_2}{b_2}}y^{\frac{c_2}{d_2}}, \quad \dot y= \alpha_2 x^{\frac{a_3}{b_3}}y^{\frac{c_3}{d_3}}-\beta_2 x^{\frac{a_4}{b_4}}y^{\frac{c_4}{d_4}},
\end{equation}
with $a_i$, $c_i\in\mathbb{Z}$ and $b_i$, $d_i\in\mathbb{Z}_{>0}$ relatively primes and has yet at least three limit cycles of odd multiplicity. Let $b=2b_1b_2b_3b_4$, $d=2d_1d_2d_3d_4$ and observe that $b\geqslant 2$ and $d\geqslant2$ are even natural numbers.  Applying the non-reversible transformation $(x,y)=(u^b,v^d)$ we obtain a new vector field given by
	\[\begin{array}{l}
		\displaystyle \dot u= \frac{1}{bu^{b-1}}\left(\alpha_1u^{2a_1b_2b_3b_4}v^{2c_1d_2d_3d_4}-\beta_1 u^{2b_1a_2b_3b_4}v^{2d_1c_2d_3d_4}\right), \vspace{0.2cm} \\
		\displaystyle \dot v= \frac{1}{dv^{d-1}}\left(\alpha_2u^{2b_1b_2a_3b_4}v^{2d_1d_2c_3d_4}-\beta_2 u^{2b_1b_2b_3a_4}v^{2d_1d_2d_3c_4}\right).
	\end{array}\]
By using the rescaling of time  $dt/d\tau=bdu^{b-1+2k}v^{d-1+2k}$, with $k\in\mathbb{Z}_{>0}$, we obtain
\begin{equation}\label{17}
	\begin{array}{l}
		\displaystyle \dot u= dv^{d-1}\left(\alpha_1u^{2(a_1b_2b_3b_4+k)}v^{2(c_1d_2d_3d_4+k)}-\beta_1 u^{2(b_1a_2b_3b_4+k)}v^{2(d_1c_2d_3d_4+k)}\right), \vspace{0.2cm} \\
		\displaystyle \dot v= bu^{b-1}\left(\alpha_2u^{2(b_1b_2a_3b_4+k)}v^{2(d_1d_2c_3d_4+k)}-\beta_2 u^{2(b_1b_2b_3a_4+k)}v^{2(d_1d_2d_3c_4+k)}\right).
	\end{array}
\end{equation}
Observe that \eqref{17} is polynomial for $k\in\mathbb{Z}_{>0}$ big enough. Moreover, since $b\geqslant2$ and $d\geqslant2$ are even numbers, it follows that \eqref{17} is reversible in relation to the lines $u=0$ and $v=0$. Hence, \eqref{17} has diffeomorphic copies of \eqref{16} at each open quadrant and thus we obtain $\mathcal{H}^M(4)\geqslant12$. Since each of these limit cycles has odd multiplicity, it follows that they persist under small perturbations and thus we also have $\mathcal{H}^M(m)\geqslant12$ for $m\in\{5,6\}$. 

Finally, we now prove that $\mathcal{H}^M(8)\geqslant20$ and $\mathcal{H}^M(7)\geqslant16$. The proof will follow by studying the cyclicity of some weak foci. For a general theory of cyclicity of limit sets, we refer to Roussarie \cite[Chapter $2$]{Roussarie}. For more details about the cyclicity of weak focus in polynomial vector fields, we refer to Christopher et al \cite[Chapter $1$]{ChrLiTor2024}. For a more computational approach, we refer to Romanovski and Shafer \cite[Chapter $6$]{RomSha2009}. 
	
Consider the system with eight monomials 
\begin{equation}\label{18}
		\dot x=a_5y^6+a_4y^5+a_3y^4+a_2y^3+a_1xy^2-a, \quad
		\dot y=xy-1,
\end{equation}
where $a=a_1+\dots+a_5$.  It is not hard to see that if $a_j=a_j^*$, $j=1,\dots,5$, where
	\[a_1^*=-1, \quad a_2^*=-\frac{161}{17}, \quad a_3^*=\frac{17}{11}, \quad a_4^*=-\frac{6}{11}, \quad a_5^*=\frac{7}{99},\]
then the point $p=(1,1)$ is a weak focus of order five, i.e. it is not hyperbolic, $L_1=\dots=L_4=0$ and $L_5\neq0$, where $L_j$ is its $j$th \emph{Lyapunov constant} (see Adronov et al. \cite[p. 254]{Andronov}). Moreover, if we calculate the Jacobian matrix of $L_1,L_2,L_3,L_4$ in relation to $a_2,a_3,a_4,a_5$, at $a_j=a_j^*$, $j=2,3,4,5$, it can be seen that
	\[\det\frac{\partial (L_1,L_2,L_3,L_4)}{\partial(a_2,a_3,a_4,a_5)}(a_2^*,a_3^*,a_4^*,a_5^*)\neq0.\]
Hence, it follows from Christopher et al \cite[Theorem $1.5$]{ChrLiTor2024} that we can choose $a_j\approx a_j^*$, $j\in\{2,3,4,5\}$, such that four limit cycles bifurcate from $p$. Now we move $a_1$ to perturb the trace of \eqref{18} at $p$ and thus to bifurcate a fifth limit cycle (see Romanovski and Shafer \cite[Theorem $6.2.7$]{RomSha2009}). Therefore, for some specific values of the parameters, system \eqref{18} has at least five limit cycles near the point $p=(1,1)$ and surrounding it. Thus, similarly to the previous argumentation, we now use the non-invertible change of variables $(x,y)=(u^2,v^2)$, followed by the rescaling of time $dt/d\tau=2uv$, obtaining the new system
\begin{equation}\label{19}
		\dot u=a_5v^{13}+a_4v^{11}+a_3v^9+a_2v^7+a_1u^2v^5-av, \quad
		\dot v=u^3v^2-u.
\end{equation}
It has again eight monomials and moreover it has a diffeomorphic copy, in each open quadrant, of the first open quadrant of \eqref{18}. In particular, it has $20$ limit cycles for some values of the coefficients and thus $\mathcal{H}^M(8)\geqslant20$. To prove $\mathcal{H}^M(7)\geqslant16$, we substitute $a_5=0$ in \eqref{18}, obtaining a system with seven mononials. In this system, if $a_j=\overline{a_j}$, $j=1,\ldots,4$, where
	\[\overline{a_1}=-1, \quad \overline{a_2}=-\frac{42}{109}, \quad \overline{a_3}=\frac{31}{109}, \quad \overline{a_4}=-\frac{6}{109},\]
then $p=(1,1)$ is a weak focus of order four and the proof follows similarly.

For each $k=1,2,3$ by taking $a_5=a_4=..=a_{k+1}=0$  and suitable $a_1,\ldots, a_{k}$ in \eqref{19} we obtain a vector field with $k+3$ monomials and at least $4k$ limit cycles, with $k$ of them included in each quadrant. These results give less limit cycles that the examples constructed from the  Boros and coauthor's result when $m=4,5$ and by taking $k=3$ gives a different proof that $\mathcal{H}^M(6)\ge12,$ with the advantage that this new example is explicit.
\end{proof}

\section{Further Thoughts}

Regarding the recent developments in the field of algebraic geometry described in the introduction, it is worthy to ask for a variant of the Hilbert number as a function of the associated newton polygon of the polynomial vector field. In particular, as a functions of the number of integer points contained in its interior. Notably, in the case of a Hamiltonian vector field $X$ associated with a polynomial~$p$, the Newton polygons $N(X)$ and $N(p)$ coincide, differing only by a translation in $\mathbb{Z}^2$. This observation, combined with the discussion made in the introduction, could be used for instance to establish a bound on the number of distinct periodic annuli of $X$ in terms of the number of integer points in $N(X)$.  We thank very much the anonymous reviewers for pointing out such developments and suggesting this variation of the problem.

\section*{Acknowledgments}

We thank to the reviewers their comments and suggestions which help us to improve the presentation of this paper. This work is supported by the Spanish State Research Agency, through the projects PID2022-136613NB-I00 grant and the Severo Ochoa and Mar\'ia de Maeztu Program for Centers and Units of Excellence in R\&D (CEX2020-001084-M),  grant 2021-SGR-00113 from AGAUR, Generalitat de Ca\-ta\-lu\-nya, and by S\~ao Paulo Research Foundation (FAPESP), grants 2019/10269-3, 2021/01799-9 and 2022/14353-1.

\end{document}